\theoremstyle{plain}
\newtheorem{proposition}{Proposition}[section]
\newtheorem{corollary}{Corollary}[section]
\theoremstyle{definition}
\theoremstyle{remark}
\newcommand{\Real}{\mathbb R}
\newcommand{\Int}{\mathbb Z}
\newcommand{\Intpos}{\Int^{> 0}}
\newcommand{\eps}{\varepsilon}
\newcommand{\A}{{\mathcal A}}
\newcommand{\C}{{\mathcal C}}
\newcommand{\I}{{\mathcal I}_\eps}
\newcommand{\Ahat}{{\widehat {\mathcal A}}_\eps}
\newcommand{\Chat}{{\widehat {\mathcal C}}_\eps}
\newcommand{\E}{{\mathcal E}}
\def\XXint#1#2#3{{\setbox0=\hbox{$#1{#2#3}{\int}$}
     \vcenter{\hbox{$#2#3$}}\kern-.5\wd0}}
\newcommand{\lpnorm}[2]{\left\|#1\right\|_{\ell^{#2}_\eps}}
\title{There is no pointwise consistent quasicontinuum energy}
\author{Matthew Dobson}
\address{CERMICS - ENPC,
6 et 8 avenue Blaise Pascal,
Cité Descartes - Champs sur Marne,
77455 Marne la Vallée Cedex 2 (FRANCE)}
\begin{document}
\begin{abstract}
Much work has gone into the construction 
of quasicontinuum energies that reduce the coupling error
along the interface between atomistic and continuum regions.
The largest consistency errors are typically 
pointwise $O(\frac{1}{\eps})$ errors, and in some cases this has been
reduced to pointwise $O(1)$ errors.
In this paper we show that one cannot create a coupling method
using a finite-range coupling interface that has o(1)-consistency
in the interface, and we use this to give an upper bound on the order
of convergence in discrete $w^{1,p}$-norms in 1D.
\end{abstract}
\maketitle
\section{Introduction}
Atomistic to continuum coupling schemes attempt to provide the high
accuracy of an atomistic simulation while also providing the
computational savings of a coarse-grained continuum approximation.  
Such simulations are used in the study of localized defects that 
interact with a long-range elastic field, such as crack tips, voids,
and interstitials.  The quasicontinuum method directly couples an atomistic region, where
each atom interacts with every other atom within a cut-off radius, to a
continuum region, whose strain energy density is computed by assuming
Cauchy-Born kinematics (locally uniform deformation gradients).  Many
variants of the scheme exist that differ on how to couple the regions
together.

One can classify quasicontinuum methods based on whether they create
a total energy or directly couple
forces in such a way that there does not exist a total energy.
The original formulation of the quasicontinuum
method~\cite{tadm96,mill02} creates a total energy by defining an energy
for each degree of freedom (atom or finite element)
and using a weighted
sum.  Unfortunately, the
model contains spurious forces at the atomistic to continuum interface
called ghost forces~\cite{rodn99}.
These forces are a pointwise $O(\frac{1}{\eps})$ consistency error in
the linearized scheme and reduce the order of convergence with respect to the interatomic spacing parameter $\eps$~\cite{dobs09b}.  
Subsequent methods such
as the quasinonlocal quasicontinuum method~\cite{shim04} or the
geometrical reconstruction scheme~\cite{e06} removed some ghost
forces, but were restricted on the number of neighbors or to specific
interface geometry in 2D and 3D.
Recently, the ECC and ACC quasicontinuum variants have been proposed in 1D
and 2D , and these schemes 
are free of ghost forces for the case of pair-potential
interactions~\cite{shap_consistent}.  A related
method, derived as an extension of QNL, is formulated in 1D
in~\cite{li_gen_qnl}. 

An alternative approach to consistency is the use of the force-based
quasicontinuum method~\cite{curt03,dobs08,makr_qcfs}, which assigns forces to
any degree of freedom, atomistic or continuum, as though the entire
model were of the same type.
Since the continuum model
and atomistic model are $o(1)$-consistent and there are no special
force laws in the atomistic-to-continuum interface, this creates a
globally point-wise consistent scheme for any geometry.  However, this method
is more difficult to analyze since it does not derive from an
energy~\cite{dobs10,dobs_spec}.
Solutions to the force-based equations can be approximated by employing
the ghost-force correction
scheme~\cite{shen99,dobs08}.

Here we ask the question of whether it is possible to create a
quasicontinuum energy which is globally $o(1)$-pointwise consistent.
We will find in Section~\ref{noconsistent} that
even in the case of a linear, 1-D model it is impossible to make an $o(1)$
consistent blending of the atomistic and continuum region for any finite-sized
interface region.  While such a finiteness assumption is common, a quasicontinuum force coupling with growing interfaces is treated in~\cite{lu_qcf}.
In Section~\ref{sec:error}, we will show how this leads to a bound on the maximal
order of convergence in discrete  $w^{1,p}$-norms in 1D.  Convergence analysis
for related quasicontinuum schemes include~\cite{lin05,gunz10,abdulle_multi,kote_blend}.

\section{Models}

We denote the positions of a periodic atomistic chain by
\begin{equation*}
y \in \Real^N = \{ \Real^\Int \ :
      \ y_{\ell+N} = y_\ell+F \text{ for all } \ell \in \Int \},
\end{equation*}
where $F$ denotes the macroscopic deformation gradient for the
periodic length and $N \in \Int_{\geq 0}$ denotes the number of 
atoms in the periodic cell.  We scale the reference length of the 
the interatomic spacing by  $\eps = \frac{1}{N}$ in order to have
a well-defined continuum limit.
We denote the displacements from the uniformly
deformed position by $u_i = y_i - F \eps i$ for all $i\in \Int.$
We define the backward difference quotient
\begin{equation*}
D_r u_i = \frac{u_{i} - u_{i-r}}{r \eps} \qquad r \in \Intpos,
\end{equation*}
where we will write $D u_i$ to denote $D_1 u_i.$  We will write the 
vector of such differences as $D u.$  We likewise define the
centered second-difference quotient
\begin{equation*}
D_r^2 u_i 
= \frac{u_{i+r} - 2 u_i + u_{i-r}}{r^2 \eps^2}  \qquad r \in \Intpos.
\end{equation*}

\subsection{Atomistic and continuum models}
We consider the atomistic energy composed of pairwise interactions with the
nearest $R$ neighbors, given by
\begin{equation}
\label{ea}
\E_\eps^a(u) = \sum_{r=1}^R \sum_{i=1}^N \eps \phi(r F +r D_r u_i),
\end{equation}
where $\phi$ denotes the interatomic interaction potential and $R$
denotes the finite-range cut-off.  The energy is scaled by the
lattice spacing $\eps$ so that there is a finite energy in the limit
$N\rightarrow \infty.$   The continuum approximation uses the same pair
potential and approximates the total energy by
\begin{equation}
\label{ec}
\E^c_\eps(u) = \sum_{r=1}^R \sum_{i=1}^N \eps \phi(r F + r D u_i).
\end{equation}
That is, interactions of the form $\phi\Big(\frac{y_i -
y_{i-r}}{\eps}\Big)$ are replaced by  $\phi\Big(r \frac{y_i -
y_{i-1}}{\eps}\Big)$.  The continuum energy is an accurate
approximation when $y_i$ is smooth.
In practical applications, the cost of computing the continuum energy is
reduced by computing the energy as a sum over the nodes in a piecewise
linear mesh rather than a sum
over all atoms.

Expanding the pairwise 
interactions, we have
\begin{equation*}
\phi(r F + D_r u_i) \approx
\phi(r F) + \phi'(r F) D_r u_i + \frac{1}{2} \phi''(r F) (D_r u_i)^2 + \cdots
\qquad \text{ for } r \in \Intpos.
\end{equation*}
For the atomistic model, the (scaled) forces on the atoms are
\begin{equation}
\label{atom_force}
\begin{split}
(L_\eps^a u)_i &= - \frac{1}{\eps} \frac{\partial \E^a(u)}{\partial u_i}
          = - \sum_{r=1}^R r^2 \phi''(rF) D_r^2 u_i,
\end{split}
\end{equation}
and for the continuum model, the (scaled) forces on the nodes are
\begin{equation}
\label{cont_force}
\begin{split}
(L_\eps^c u)_i &= - \frac{1}{\eps} \frac{\partial \E^c(u)}{\partial u_i}
          = - \sum_{r=1}^R r^2 \phi''(rF) D^2 u_i.
\end{split}
\end{equation}

\subsection{Quasicontinuum coupling}

A quasicontinuum energy couples the atomistic and continuum energies, by
partitioning the chain into an atomistic region $\A$ and a continuum region
$\C$ that satisfy
\begin{equation}
\label{partition}
\A \cup \C = (0,1] \quad \text{ and } \quad \A \cap \C = \emptyset.
\end{equation}
We assume that $\A$ is a finite union of intervals.  An atom $i$ is in the
atomistic region if $\eps i \in \A.$ Away from the boundary of these
regions, the energy is computed using the energy contributions
$\E^a_i$ or $\E^c_i.$  Near the interface between the regions, there
can exist special interfacial energies that we assume are finite range
and which do not change in form with $\eps.$  Note that the interfacial
interactions may be longer-ranged than the atomistic ones.  

The above assumptions imply that away from the atomistic to continuum 
interface and for sufficiently large $N,$ 
the atoms in the continuum region only feel continuum forces~\eqref{cont_force} and
the atoms in the atomistic region only feel atomistic forces~\eqref{atom_force}.
We define $\Ahat$ and $\Chat$ as the interior portions of the atomistic and continuum region that only feel interactions from within their specific region.  We define the interface region $\I = [0,1] \setminus (\Ahat \cup \Chat).$  In the following,
we will focus on a single interface between the atomistic and continuum regions.

\section{Consistency at the Interface}
\label{noconsistent}
We say that a family of operators $L_\eps$ is
$o(1)$-consistent with the atomistic operator
$L^a$ if
\begin{equation}
\label{cons1}
\lim_{\eps \rightarrow 0} \lpnorm{L_\eps u - L_\eps^a u}{\infty} = 0
\end{equation}
for every $u \in C_{per}^2(\overline{\Omega}),$ where for every $N,$ $u \in C_{per}^2(\overline{\Omega})$ defines a vector $u \in \Real^N$ by $u_i = u(i/N).$
Equivalently, we have the local consistency requirement that the equations
are consistent if
\begin{equation}
\label{cons}
(L_\eps u - L_\eps^a u) = 0 \qquad \text{ for the three vectors } u_j=1, j, j^2.
\end{equation}
We note that the continuum operator $L_\eps^c$ is consistent with the 
atomistic operator.
In the following, we show that one cannot satisfy these consistency
equations for a quasicontinuum energy
even in
the simplified case of second-neighbor interactions in a
1-D chain with harmonic pair potentials.

We now restrict ourselves to the second-neighbor ($R=2$) case for the
atomistic and continuum energies.  We write the quasicontinuum operator
in terms of its first and
second-neighbor contributions
\begin{equation}
\label{orders}
\begin{split}
L_\eps^{qc} &= \frac{1}{\eps^2} ( \phi''(F) L^{qc}_1 + \phi''(2F) L^{qc}_2),
\end{split}
\end{equation}
where the operators $L^{qc}_1$ and $L^{qc}_2$ are independent
of $\phi$ and $\eps$.  This form is possible when we assume that the corresponding energy is a function of the ``strains'' $D_r u$ only.  We then consider possible choices for quasicontinuum 
energies.  Since the first neighbor term for both atomistic and
continuum operator are identical, it is standard to
take the same operator for the first neighbor term of a quasicontinuum energy.
This gives $(L^{qc}_1 u)_j = - \eps^2 D^2 u_j.$  The second neighbor term is a symmetric operator satisfying
\begin{equation}
\label{blend}
(L^{qc}_2 u)_i = \begin{cases}
\displaystyle - u_{j+2} + 2 u_j - u_{j-2} = - 4 \eps^2 D_2^2 u_i 
      & \eps i \in \Ahat,  \\[6pt]
\displaystyle - 4u_{j+1} + 8 u_j - 4u_{j-1} = - 4 \eps^2 D^2 u_i 
      & \eps i \in \Chat, \\[6pt]
\end{cases}
\end{equation}
where we have not yet specified the interaction law in the interface $\I.$
We now consider one boundary between atomistic and continuum regions.
Let $m$ denote the number of atoms in the interval of the atomistic to continuum
interface $\I$ surrounding
the boundary between the regions.  This is represented in the operator
$L^{qc}_2$ by an $m \times m$ set of coefficients, $(L^{qc}_2)_{ij}$ for
$1 \leq i,j \leq m.$
We have labeled the interface starting at $j=1$ and will assume without loss
of generality that $(L_2^{qc} u)_j$ is continuum for $j<1$
and atomistic for $j > m.$

\begin{proposition}
There is no symmetric linear quasicontinuum operator $L_\eps^{qc}$ 
satisfying~\eqref{cons}.
\end{proposition}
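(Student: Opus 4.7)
The plan is to derive a contradiction by evaluating a single scalar invariant of the interface---a weighted moment of the free bond stiffnesses---in two different ways that yield values differing by $-2$ regardless of $m$.

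First I would reduce to the second-neighbor operator $L^{qc}_2$: because the first-neighbor parts of $L^{qc}_\eps$ and $L^a_\eps$ coincide, \eqref{cons} is equivalent to the analogous three equations for $L^{qc}_2$. Since $L^{qc}_2$ is symmetric and arises from a quadratic strain energy, it admits a bond-stiffness representation
\[
\E^{qc}_2(u) = \tfrac{1}{2}\sum_{j<k} \alpha_{jk}(u_k-u_j)^2, \qquad \alpha_{jk}=\alpha_{kj},
\]
and I set $\alpha^{(r)}_j := \alpha_{j,j+r}$. Requiring the rows of $L^{qc}_2$ outside the interface to match $L^c_2$ on $\Chat$ and $L^a_2$ on $\Ahat$ forces the cross-boundary bonds $\alpha^{(1)}_0 = 4$, $\alpha^{(2)}_{m-1} = \alpha^{(2)}_m = 1$, with all other bonds touching $\Chat \cup \Ahat$ equal to zero. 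The case $m=1$ can be ruled out at once, since the bond $(0,2)$ is simultaneously forced to have stiffness $0$ by row $0$ and stiffness $1$ by row $2$; so assume $m \geq 2$.

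Next I would translate \eqref{cons} into bond-moment identities. With $F(j) := \sum_r r\,\alpha^{(r)}_j$, $G(j) := \sum_r r\,\alpha^{(r)}_{j-r}$, $f(j) := \sum_r r^2\alpha^{(r)}_j$, and $g(j) := \sum_r r^2\alpha^{(r)}_{j-r}$, the $u_k=1$ consistency is automatic from the bond representation; the $u_k=k$ condition becomes $F(j)=G(j)$; and the $u_k=k^2$ condition becomes $f(j)+g(j)=8$, for every $j \in \I$. I would then evaluate the two weighted sums
\[
\mathrm{(I)} \quad \sum_{j=1}^m j\bigl(f(j)+g(j)\bigr) = 4m(m+1), \qquad
\mathrm{(II)} \quad \sum_{j=1}^m j^2\bigl(F(j)-G(j)\bigr) = 0.
\]
Each bond $(a,a+r)$ with both endpoints in $\I$ contributes $+r^2(2a+r)\alpha^{(r)}_a$ to (I) and $-r^2(2a+r)\alpha^{(r)}_a$ to (II), so if $X := \sum_{\text{internal}} r^2(2a+r)\alpha^{(r)}_a$, the entire interior cancels between the two calculations. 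Summing the contributions of the three cross-boundary bonds $(0,1)$, $(m-1,m+1)$, $(m,m+2)$ produces boundary totals $8m$ for (I) and $4m^2-4m-2$ for (II); substituting yields $X = 4m(m-1)$ from (I) and $X = 4m(m-1)-2$ from (II), a contradiction.

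The main obstacle is choosing the pair of weights; the weights $j$ on the quadratic consistency equation and $j^2$ on the linear consistency equation are the unique pairing that makes the interior contributions to (I) and (II) equal and opposite, so that every free interior stiffness drops out and the mismatch sits entirely at the boundary. That boundary mismatch, arising from the asymmetry between the single $r=1$ continuum bond of stiffness $4$ and the two $r=2$ atomistic bonds of stiffness $1$, is the fixed constant $-2$, independent of interface size.
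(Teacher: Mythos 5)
Your proposal is correct and is essentially the paper's own argument: the paper likewise weights the linear consistency equations by $i^2$ and the quadratic ones by $i$, sums over the interface rows $i=1,\dots,m$, and uses the symmetry of $L^{qc}_\eps$ and $L^a_\eps$ (together with the known continuum/atomistic rows outside the interface) to cancel all interior unknowns, leaving the same fixed boundary defect $-2$. Your bond-stiffness bookkeeping, the explicit cross-boundary stiffnesses $4,1,1$, and the separate $m=1$ check are just a repackaging of that same moment-weighting cancellation.
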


\begin{proof}
We suppose toward contradiction that $L^{qc}$ satisfies~\eqref{cons}.  
Then, for all $i$, we have that 
\begin{equation}
\label{cons2}
\sum_{j} (L_{ij}^{qc} - L_{ij}^a) j = 0  \qquad \text{and} \qquad
\sum_{j} (L_{ij}^{qc} - L_{ij}^a) j^2 = 0,
\end{equation}
where $j$ runs over all indices for our periodic chain, though due to the limited range
of interactions outside the interface, we may restrict $j=-1,\dots,m+2.$
Now, we will sum the equations above for $i=1,\dots,m,$ pre-multiplying by $i^2$ and
$i,$ respectively, in order to cancel
out the interface variables in the interior.  We note that by
symmetry, $L_{ij}^{qc} = L_{ij}^c$ for $j < 1$ and $L_{ij}^{qc} = L_{ij}^a$ for $j > m.$ 
We have
\begin{equation*}
\begin{split}
0 &= \sum_{i=1}^m \sum_{j=-1}^{m+2} i^2 (L_{ij}^{qc} - L_{ij}^a) j -
\sum_{i=1}^m \sum_{j=-1}^{m+2} i (L_{ij}^{qc} - L_{ij}^a) j^2, \\
&= \sum_{i=1}^m \sum_{j=1}^{m} (L_{ij}^{qc} - L_{ij}^a) (i^2 j - i j^2)
+ \sum_{i=1}^m \sum_{j\in\{-1,0\}} (L_{ij}^{c} - L_{ij}^a) (i^2 j - i j^2)\\
&\qquad+ \sum_{i=1}^m \sum_{j\in\{m+1,m+2\}} (L_{ij}^{a} - L_{ij}^a) (i^2 j - i j^2)
\end{split}
\end{equation*}
We cancel the first summation by the symmetry of both $L^{qc}_\eps$ and $L^a,$ and the third
summation cancels identically.  The second summation has a single non-zero term
$i=1, j=-1,$ so that we have
\begin{equation*}
\begin{split}
0 &= \sum_{i=1}^m \sum_{j\in\{-1,0\}} (L_{ij}^{c} - L_{ij}^a) (i^2 j - i j^2)\\
&= 1 (-1 -1)= -2.
\end{split}
\end{equation*}
We have arrived at a contradiction to our assumption that $L^{qc}$ was  consistent~\eqref{cons2}.
\end{proof}

\section{Global convergence error}
\label{sec:error}
The result above shows that any linear quasicontinuum scheme must
have at least O(1) consistency error in the $\ell^\infty$ norm.  We can 
use the above bound on interfacial consistency error to give
an upper bound
on the order of convergence for any quasicontinuum energy.
We recall the definitions of the discrete $\ell_\eps^p$-norms on $\Real^N,$
\begin{equation*}
\begin{split}
\lpnorm{u}{p} &:= \left(\eps \sum^{N}_{j=1}  |u_j|^p\right)^{1/p},
	\qquad 1\le p<\infty,\\
\lpnorm{u}{\infty} &:= \max_{1\leq j\leq N}  |u_j|.
\end{split}
\end{equation*}
Provided that $L_\eps^{qc}$
is shift-invariant, that is, it satisfies~\eqref{cons} for $v_j = 1,$ we can
write the operator in terms of differences,
$$L_\eps^{qc} u= \widetilde{L}_\eps^{qc} D u,$$
where we note that the coefficients of $\eps \widetilde{L}_\eps^{qc}$
are bounded independently from $\eps.$
We have the following lower bound for the error.
\begin{corollary}
There exists $u \in C_{per}^2(\overline{\Omega})$
such that for $L_\eps^{qc} u_{qc} = L_\eps^a u$ the error $e = u - u_{qc}$ satisfies
\begin{equation*}
\begin{split}
\lpnorm{D e}{p} \geq C \eps^{1+1/p}
\end{split}
\end{equation*}
for $1 \leq p \leq \infty.$
\end{corollary}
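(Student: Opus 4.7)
My plan is to combine the interfacial consistency failure from the Proposition with an easy inversion bound for $L_\eps^{qc}$. First I would observe that $e := u - u_{qc}$ satisfies $L_\eps^{qc} e = L_\eps^{qc} u - L_\eps^a u =: \tau$, so factoring as $\widetilde L_\eps^{qc} De = \tau$ and using that the coefficients of $\eps \widetilde L_\eps^{qc}$ are uniformly bounded while each row has only a fixed finite number of nonzero entries (finite interaction range), I get the pointwise inversion inequality
\[
\lpnorm{De}{\infty} \ge \frac{\eps}{C}\lpnorm{\tau}{\infty}.
\]
This reduces the corollary to exhibiting a single $u \in C_{per}^2(\overline{\Omega})$ for which $\lpnorm{\tau}{\infty}$ stays bounded below uniformly as $\eps \to 0$.

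To build such a $u$, I would revisit the Proposition's proof: the identity $-2 = \sum_{i=1}^m [i^2 \alpha_i - i \beta_i]$, with $\alpha_i := ((L_2^{qc}-L_2^a)v)_i$ and $\beta_i := ((L_2^{qc}-L_2^a)w)_i$ for $v_j = j$, $w_j = j^2$, forces at least one index $i^* \in \{1,\dots,m\}$ to satisfy $\alpha_{i^*} \ne 0$ or $\beta_{i^*} \ne 0$. I then take any trigonometric polynomial $u$ whose Taylor expansion at the interface point $x_0$ has $u'(x_0) \ne 0$ in the first case and $u'(x_0)=0$, $u''(x_0) \ne 0$ in the second. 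Expanding $u(x_0 + k\eps)$ for the finitely many indices $k$ in the interaction range of $i^*$, and using that $L_1^{qc}$ agrees exactly with the first-neighbor atomistic operator so its contribution to $\tau$ vanishes, one obtains
\[
\tau_{i^*} = \phi''(2F)\left(\frac{u'(x_0)\,\alpha_{i^*}}{\eps} + \tfrac{1}{2} u''(x_0)\,\beta_{i^*}\right) + o(1),
\]
so $|\tau_{i^*}|$ stays bounded below by a positive constant for small $\eps$. Combining this with the inversion inequality gives $\lpnorm{De}{\infty} \ge C\eps$, and the general $\ell^p_\eps$ bound follows from the elementary inequality $\lpnorm{De}{p} \ge \eps^{1/p}\lpnorm{De}{\infty}$, yielding $\lpnorm{De}{p} \ge C\eps^{1+1/p}$.

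The main technical point I anticipate is the passage from the polynomial test vectors used in the Proposition to a genuine $u \in C_{per}^2(\overline{\Omega})$: one must verify that the Taylor remainder, which is only $o(\eps^2)$ under mere $C^2$ regularity, does not wash out the leading $O(1)$ term after being amplified by the $1/\eps^2$ normalization of $L_\eps^{qc}$. This works because the interface has fixed width $m$ independent of $\eps$, so only finitely many such remainders enter $\tau_{i^*}$ and together they contribute $o(1)$, which is strictly dominated by the nonzero constant leading term.
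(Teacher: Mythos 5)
Your proposal is correct and follows essentially the same route as the paper: the inverse bound $\lpnorm{L_\eps^{qc} e}{\infty} \leq \frac{C}{\eps}\lpnorm{De}{\infty}$ coming from the boundedness of the coefficients of $\eps\widetilde L_\eps^{qc}$, a smooth $u$ whose consistency error stays bounded below, and the norm-equivalence $\lpnorm{De}{p}\geq \eps^{1/p}\lpnorm{De}{\infty}$. The only difference is that you explicitly construct the witness $u$ by Taylor-expanding at the interface and invoking the identity $\sum_i (i^2\alpha_i - i\beta_i) = -2$ from the Proposition, a step the paper simply asserts via the stated equivalence of the pointwise consistency conditions \eqref{cons} with $o(1)$-consistency \eqref{cons1}; this is a legitimate (and welcome) filling-in of that implicit step rather than a different argument.
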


\begin{proof}
Since the coefficients of $\eps \widetilde{L}_\eps^{qc}$ are bounded, we
have the inequality $\lpnorm{L^{qc} v}{\infty} \leq \frac{C}{\eps}
\lpnorm{D v}{\infty}.$  Using the fact that the operator is not o(1)-consistent,
we choose $u \in C_{per}^2$ such that $$\lim_{\eps \rightarrow 0} \lpnorm{(L_{\eps}^{a} - L_{\eps}^{qc}) u}{\infty} \geq c.$$
We then write
\begin{equation*}
\begin{split}
c &\leq \lpnorm{ (L_{\eps}^{a} - L_{\eps}^{qc}) u}{\infty}
=\lpnorm{ L_{\eps}^{qc} e}{\infty}
\leq \frac{C}{\eps} \lpnorm{D e}{\infty}.
\end{split}
\end{equation*}
Which implies $\lpnorm{D e}{\infty} \geq C \eps.$  The estimate for other
norms follows from the equivalence estimates for $\ell^p$ norms,
$\lpnorm{D e}{p} \geq \eps^{1/p} \lpnorm{D e}{\infty} \geq C \eps^{1+1/p}.$
\end{proof}

This result implies that the rates of convergence obtained in 1D for the
quasinonlocal quasicontinuum method in~\cite{dobs09b, ming09} are asymptotically
optimal among all possible quasicontinuum energies with finite-range interface
interactions.

\section{Conclusion}

There is much past and ongoing work to create consistent atomistic-to-continuum
coupling schemes, and a major obstacle are the O($\frac{1}{\eps}$)-ghost forces
common in such models.  Quasicontinuum energies that are O(1)-consistent
have been constructed in 1D and 2D, at least for the case of pair-potential
interactions.
In this work we have shown the impossibility of improving the asymptotic
truncation error for a linear quasicontinuum energy in 1D.  Since a higher
dimensional problems can behave like a 1D configuration due to the geometry (such as simple
uniaxial
strain applied parallel to a planar atomistic-to-continuum interface),
this implies that
it is impossible to make a general o(1)-consistent coupling energy in higher
dimensions as well.  The effect on the rate of convergence shows that
in 1D the quasinonlocal quasicontinuum energy and its generalizations are asymptotically optimal
among quasicontinuum energies with
respect to $w^{1,p}$-norm convergence.  However, for other quantities of interest,
such as the critical strain for dislocation movement~\cite{kote11}, such a method
may not be asymptotically optimal.

\section*{Acknowledgements}
The author wishes to thank Mitchell Luskin for many helpful comments on the manuscript.
This work is supported in part by the NSF Mathematical Sciences Postdoc Research Fellowship and ANR PARMAT (ANR-06-CIS6-006).


\begin{thebibliography}{10}

\bibitem{abdulle_multi}
A.~Abdulle, P.~Lin, and A.~V. Shapeev.
\newblock Homogenization-based analysis of quasicontinuum method for complex
  crystals.
\newblock arXiv:1006.0378.

\bibitem{curt03}
W.~Curtin and R.~Miller.
\newblock Atomistic/continuum coupling in computational materials science.
\newblock {\em Modell. Simul. Mater. Sci. Eng.}, 11(3):R33--R68, 2003.

\bibitem{dobs08}
M.~Dobson and M.~Luskin.
\newblock Analysis of a force-based quasicontinuum method.
\newblock {\em M2AN Math. Model. Numer. Anal.}, 42(1):113--139, 2008.

\bibitem{dobs09b}
M.~Dobson and M.~Luskin.
\newblock An optimal order error analysis of the one-dimensional quasicontinuum
  approximation.
\newblock {\em SIAM J. Numer. Anal.}, 47(4):2455--2475, 2009.

\bibitem{dobs10}
M.~Dobson, M.~Luskin, and C.~Ortner.
\newblock Stability, instability, and error of the force-based quasicontinuum
  approximation.
\newblock {\em Archive for Rational Mechanics and Analysis}, 197(1):179--202,
  2010.

\bibitem{dobs_spec}
M.~Dobson, C.~Ortner, and A.~V. Shapeev.
\newblock The spectrum of the force-based quasicontinuum operator for a
  homogeneous periodic chain.
\newblock submitted.

\bibitem{e06}
W.~E, J.~Lu, and J.~Yang.
\newblock Uniform accuracy of the quasicontinuum method.
\newblock {\em Phys. Rev. B}, 74:214115, 2006.

\bibitem{gunz10}
M.~Gunzburger and Y.~Zhang.
\newblock Quadrature-rule type approximations to the quasicontinuum method for
  short and long-range interatomic interactions.
\newblock {\em Comput. Methods Appl. Mech. Engrg.}, 199:648--659, 2010.

\bibitem{kote11}
B.~V. Koten, X.~H. Li, M.~Luskin, and C.~Ortner.
\newblock A computational and theoretical investigation of the accuracy of
  quasicontinuum methods.
\newblock In I.~Graham, T.~Hou, O.~Lakkis, and R.~Scheichl, editors, {\em
  Numerical Analysis of Multiscale Problems}, to appear.

\bibitem{kote_blend}
B.~V. Koten and M.~Luskin.
\newblock Analysis of energy-based blended quasicontinuum approximations.
\newblock {\em SIAM J. Numer. Anal.}, to appear.

\bibitem{li_gen_qnl}
X.~H. Li and M.~Luskin.
\newblock A generalized quasi-nonlocal atomistic-to-continuum coupling method
  with finite range interaction.
\newblock {\em IMA Journal of Numerical Analysis}, to appear.
\newblock arXiv.org:1007.2336.

\bibitem{lin05}
P.~Lin.
\newblock Convergence analysis of a quasi-continuum approximation for a
  two-dimensional material.
\newblock {\em SIAM J. Numer. Anal.}, 45(1):313--332, 2007.

\bibitem{lu_qcf}
J.~Lu and P.~Ming.
\newblock Convergence of a force-based hybrid method for atomistic and
  continuum models in three dimension.
\newblock arXiv:1102.2523.

\bibitem{makr_qcfs}
C.~Makridakis, C.~Ortner, and E.~S\"uli.
\newblock Stress-based atomistic/continuum coupling: a new variant of the
  quasicontinuum approximation.
\newblock preprint.

\bibitem{mill02}
R.~Miller and E.~Tadmor.
\newblock The quasicontinuum method: Overview, applications and current
  directions.
\newblock {\em J. Comput. Aided Mater. Des.}, 9(3):203--239, 2002.

\bibitem{ming09}
P.~Ming and J.~Z. Yang.
\newblock Analysis of a one-dimensional nonlocal quasi-continuum method.
\newblock {\em Multiscale Model. Simul.}, 7(4):1838--1875, 2009.

\bibitem{rodn99}
D.~Rodney and R.~Phillips.
\newblock Structure and strength of dislocation junctions: An atomic level
  analysis.
\newblock {\em Phys. Rev. Lett.}, 82(8):1704--1707, Feb 1999.

\bibitem{shap_consistent}
A.~V. Shapeev.
\newblock Consistent energy-based atomistic/continuum coupling for two-body
  potentials in 1d and 2d.
\newblock {\em Multiscale Model. Simul.}, to appear.
\newblock arXiv:1010.0512.

\bibitem{shen99}
V.~Shenoy, R.~Miller, E.~Tadmor, D.~Rodney, R.~Phillips, and M.~Ortiz.
\newblock An adaptive finite element approach to atomic-scale mechanics --- the
  quasicontinuum method.
\newblock {\em J. Mech. Phys. Solids}, 47(3):611--642, March 1999.

\bibitem{shim04}
T.~Shimokawa, J.~Mortensen, J.~Schiotz, and K.~Jacobsen.
\newblock Matching conditions in the quasicontinuum method: Removal of the
  error introduced at the interface between the coarse-grained and fully
  atomistic regions.
\newblock {\em Phys. Rev. B}, 69(21):214104, 2004.

\bibitem{tadm96}
E.~Tadmor, M.~Ortiz, and R.~Phillips.
\newblock Quasicontinuum analysis of defects in solids.
\newblock {\em Phil. Mag. A}, 73(6):1529--1563, 1996.

\end{thebibliography}
\end{document}